\title[The first and second homotopy group of a homogeneous space]%
{The first and second homotopy groups\\ of a homogeneous space  \\
of a complex linear algebraic group}
\author{Mikhail Borovoi}
\address{
Raymond and Beverly Sackler School of Mathematical Sciences,
Tel Aviv University, 6997801 Tel Aviv, Israel}
\email{borovoi@tauex.tau.ac.il}
\thanks{The author was partially supported
by the Israel Science Foundation (grant 1030/22).}
\keywords{Fundamental group, second homotopy group, homogeneous space, linear algebraic group}
\subjclass{%
14F35
, 14M17
, 20G20
}
\DeclareSymbolFont{rsfs}{U}{rsfs}{m}{n}
\DeclareSymbolFontAlphabet{\mathrsfs}{rsfs}
\DeclareTextFontCommand{\textcyr}{\fontencoding{OT2}
    \fontfamily{wncyr}\fontseries{m}\fontshape{n}\selectfont}
\theoremstyle{plain}
\newtheorem{theorem}{Theorem} [section]
\newtheorem{lemma}[theorem]{Lemma}
\newtheorem{corollary}[theorem]{Corollary}
\newtheorem{conditional-result}[theorem]{Conditional Result}
\newtheorem{theorem?}{Theorem(?)} [section]
\newtheorem{proposition?}[theorem]{Proposition(?)}
\newtheorem{lemma?}[theorem]{Lemma(?)}
\newtheorem{corollary?}[theorem]{Corollary(?)}
\newtheorem*{theorem*}{Theorem}
\newtheorem*{proposition*}{Proposition}
\newtheorem*{lemma*}{Lemma}
\newtheorem*{corollary*}{Corollary}
\newtheorem*{question*}{Question}
\newtheorem*{conjecture*}{Conjecture}
\newtheorem*{claim*}{Claim}
\newtheorem*{introtheorem*}{Theorem}
\newtheorem*{introproposition*}{Proposition}
\newtheorem*{introlemma*}{Lemma}
\newtheorem*{introcorollary*}{Corollary}
\theoremstyle{definition}
\newtheorem{remark}[theorem]{Remark}
\newtheorem*{definition*}{Definition}
\newtheorem*{example*}{Example}
\theoremstyle{remark}
\newcommand{\labelt}[1]{\xrightarrow{\makebox[1.2em]{\scriptsize ${#1}$}}}
\newcommand{\labelto}[1]{\xrightarrow{\makebox[1.5em]{\scriptsize ${#1}$}}}
\newcommand{\lra}{\longrightarrow}
\newcommand{\isoto}{\overset{\sim}{\to}}
\newcommand\longisoto{\overset\sim\lra}
\newcommand{\into}{\hookrightarrow}
\newcommand{\onto}{\twoheadrightarrow}
\def\uu{\mathrm{u}}
\def\red{\mathrm{red}}
\def\tor{{\mathrm{tor}}}
\def\ssc{{\mathrm{sc}}}
\def\sss{{\mathrm{ss}}}
\def\mult{{\mathrm{mult}}}
\def\ssu{{\mathrm{ssu}}}
\renewcommand{\mathbb}{\mathds}
\newcommand{\RR}{{\mathbb{R}}}
\newcommand{\CC}{{\mathbb{C}}}
\newcommand{\ZZ}{{\mathbb{Z}}}
\newcommand{\QQ}{{\mathbb{Q}}}
\newcommand{\Gal}{{\rm Gal}}
\newcommand{\Pic}{{\rm Pic}}
\renewcommand{\ker}{{\rm ker}}
\newcommand{\coker}{{\rm coker}}
\newcommand{\Hom}{{\rm Hom}}
\newcommand{\That}{{\widehat{T}}}
\newcommand{\Hhat}{{\widehat{H}}}
\newcommand{\Ghat}{{\widehat{G}}}
\def\G{{\mathbb{G}}}
\def\Hhat{{\widehat{H}}}
\def\Z{{\ZZ}}
\def\Q{{\QQ}}
\def\C{{\CC}}
\def\R{{\RR}}
\def\Ext{{\textup{Ext}}}
\def\top{^{\textup{top}}}
\def\alg{_{\textup{alg}}}
\def\top{{\textup{top}}}
\def\alg{{\textup{alg}}}
\def\kerchar{{\textup{\rm ker.char}}}
\def\Stab{{\textup{Stab}}}
\def\Zhat{{\widehat{\Z}}}
\def\Kbul{{K^\bullet}}
\def\tors{{\rm tors}}
\def\tf{{\rm t.f.}}
\def\AA{{\mathrsfs{A}}}
\def\ii{{\textbf{\textit{i}}}}
\def\pit{\pi^\top}
\renewcommand{\Zhat}{{\Z^\wedge}}
\newcommand{\piet}{{\pi_1^{\text{\rm\'et}}}}
\newcommand{\hs}{\kern 0.8pt}
\newcommand{\hssh}{\kern 1.2pt}
\newcommand{\hshs}{\kern 1.6pt}
\newcommand{\hssss}{\kern 2.0pt}
\newcommand{\hm}{\kern -0.8pt}
\newcommand{\hmm}{\kern -1.2pt}
\newcommand{\GmC}{{\G_{{\rm m},\C}}}
\newcommand{\GmR}{{\G_{{\rm m},\R}}}
\newcommand{\Gmk}{{\G_{{\rm m},\hs k}}}
\def\vk{\varkappa}
\def\eq{{\,\Leftrightarrow\,}}
\def\RD{{\rm RD}}
\def\pia{\pi^\alg}
\def\ev{{\rm ev}}
\begin{document}

\begin{abstract}
Let $X$ be a homogeneous space of a connected linear algebraic group $G$
defined over the field of complex numbers $\C$.
Let $x\in X(\C)$ be a point. We denote by $H$ the stabilizer of $x$ in $G$.
When $H$ is connected, we compute the topological fundamental group $\pi_1^\top(X(\C),x)$.
Moreover, we compute the second homotopy group $\pi_2^\top(X(\C),x)$.
\end{abstract}

\maketitle

\section{Introduction}

The topological fundamental group of a {\em connected  linear algebraic group} over  $\C$ was defined algebraically
by Merkurjev \cite[Section 10.1]{Merkurjev} and by the author   \cite[Definition 1.3]{Bor-Memoir}.
A third definition was proposed by  Colliot-Th\'el\`ene \cite[Proposition-Definition 6.1]{CT}.
The definition of Colliot-Th\'el\`ene was generalized to the reductive group schemes
by Gonz\'alez-Avil\'es \cite[Definition 3.7]{GA}
and by the author and Gonz\'alez-Avil\'es  \cite[Definition 2.11]{BG}.
We wish to compute algebraically the topological fundamental group
of a {\em homogeneous space} $G/H$
of a connected linear algebraic group $G$ over $\C$.
In general, by the result of the author and Cornulier \cite{BC},
the topological fundamental group of the homogeneous space $G/H$ cannot be computed algebraically.
In this article we compute algebraically the topological fundamental group of $G/H$
under a certain connectedness condition  on $H$, in particular, in the case when $H$ is connected.
Moreover, we compute algebraically the second homotopy group of $G/H$
(without connectedness assumptions on $H$).

In this article, by a variety over an algebraically closed field $k$ of characteristic 0
we mean a separated integral scheme of finite type over $k$.

Let $X$ be a variety defined over $\C$.
Let $x\in X(\C)$.
Consider the pointed  topological  space $(X(\C),x)$ and its (topological) homotopy groups
$\pi_1^\top(X(\C),x)$ and $ \pi_2^\top(X(\C),x)$.
We write  $\pi_1^\top(X,x)$ for $\pi_1^\top(X(\C),x)$, and we write $\pi_2^\top(X,x)$ for $\pi_2^\top(X(\C),x)$.
Set
$$\Z(1)=\pi_1^\top(\GmC(\C),1)=\pi_1^\top(\C^\times,1)$$
where  $\GmC$ denotes the multiplicative group over $\C$.
Choose an element  $\ii\in\C$ such that $\ii^2=-1$.
There is a generator $\xi_\ii$ (depending on the choice of $\ii$) of $\Z(1)=\pi_1^\top(\C^\times,1)$
given  by the loop
$$
t\mapsto \exp\,2\pi \ii\, t\colon\quad [0,1]\to \C^\times.
$$
We obtain an isomorphism  (depending on the choice of $\ii$)
$$\Z\,\isoto\, \Z(1)\colon m\mapsto m\xi_\ii\quad\ \text{for}\ \,m\in\Z.$$
We assume that the group $\pi_1^\top(X,x)$ is abelian. Consider the abelian groups
\begin{equation}\label{e:-1}
\pi_n^\top(X,x)(-1)\coloneqq\Hom\big(\Z(1), \pi_n^\top(X,x)\big)\quad\ \text{for}\ \, n=1,2.
\end{equation}
There are isomorphisms (depending on the choice of $\ii$)
\begin{equation*}
\pi_n^\top(X,x)(-1)=\Hom\big(\Z(1),\pi_n^\top(X,x)\big)\, \longisoto\,
   \pi_n^\top(X,x),\quad \ \phi\mapsto\phi(\xi_\ii).
\end{equation*}

\noindent
{\em Notation.}\ \
Let $k$ be an algebraically closed field of characteristic 0.
Let $H$ be a linear algebraic group defined over $k$.
We use the following notation:
\begin{itemize}
\item $H^0$ is the identity component of $H$;

\item $\pi_0(H)=H/H^0$, which is a finite $k$-group;

\item $H^\uu$ is the unipotent radical of $H^0$;

\item $H^\red=H^0/H^\uu$, which is a connected reductive group;

\item $H^\sss=[H^\red, H^\red]$  (the commutator subgroup of $H^\red$), which is semisimple;

\item $H^\ssc$ is the universal cover of $H^\sss$; it is simply connected;

\item $H^\tor=H^\red/H^\sss$, which is a torus;

\item $H^\ssu=\ker\big[H^0\to H^\tor\big]$; it fits into a short exact sequence
\begin{equation}\label{e:ssu}
1\to H^\uu\to H^\ssu\to H^\sss\to 1.
\end{equation}

\end{itemize}
Observe that $H^\tor$ is the largest toric quotient of  $H^0$ and that  $H^\ssu$ is connected and has no nontrivial characters.

If $T$ is a torus over $k$, we write   $T_*$
for the cocharacter group of $T$, that is,
 $$T_*=\Hom_k(\Gmk, T)$$
where $\Gmk$ denotes the multiplicative group over $k$.
Then $T_*\cong\Hom\big(\That,\Z\big)$ where  $\That=\Hom_k(T, \Gmk)$ is the character group of $T$.

Let $T_H^\red\subseteq H^\red$ be a maximal torus.
Consider the composite homomorphism
\[\rho\colon H^\ssc\onto H^\sss\into H^\red\]
and set $T_H^\ssc=\rho^{-1}(T_H^\red)$,  which is a maximal torus of $H^\ssc$.
Following \cite[Section 1]{Bor-Memoir},
we define the {\em algebraic fundamental group}
\[\pia_1(H)\coloneqq \coker \big[(T_H^\ssc)_*\to (T_H^\red)_*\big].\]
This group is well defined (does not depend on the choice of $T_H^\red$
up to a transitive system of isomorphisms); see \cite[Lemma 1.2]{Bor-Memoir}.

Let $X$ be a homogeneous space of a connected linear algebraic group  $G$
defined over an algebraically closed field $k$ of characteristic 0.
Choose a $k$-point $x\in X(k)$ and set  $H=\mathrm{Stab}_G(x)$.
Denote by $H^\mult$ the largest quotient group of  $H$ of multiplicative type.
Set $H^\kerchar:=\ker \big[H\to H^\mult\big]$;
then $H^\kerchar$ is the intersection
of the kernels  of all  characters $\chi\colon H\to\Gmk$ of $H$.
We consider the following conditions on $G$ and $H$:
\begin{enumerate}[\upshape (a)]
    \item  $\Pic(G)=0$,
    \item $H^\kerchar$ is connected.
\end{enumerate}
Observe that  (a) is satisfied if and only if  $G^\sss$ is simply connected
(see  Corollary \ref{c:Pic-G-Gss} below),
and that  (b) is satisfied if $H$ is connected or abelian.

Denote  $\Ghat:=\Hom(G,\Gmk)$ and $\Hhat:=\Hom(H,\Gmk)$; these groups are clearly abelian.
We write
$$
\Ext^0\big(\Ghat\to\Hhat,\Z\big):=\Ext^0\big(\big[\Ghat\labelt{i^*}\Hhat\big\rangle,\hs\Z\hs\big)
$$
where $\big[\Ghat\labelt{i^*}\Hhat\big\rangle$ is a complex
with $\Ghat$ in degree $0$ and $\Hhat$ in degree $1$.
The homomorphism $i^*$ is induced by the inclusion $i\colon H\into G$.
See Section \ref{s:extensions} for the definition of  $\Ext^0$.

\begin{theorem}\label{thm:main}
Let  $X$ be a homogeneous space of a connected linear algebraic group $G$ over $\C$.
Let $x\in X(\C)$,
and set $H=\mathrm{Stab}_G(x)$.
Assume that $\Pic(G)=0$ and that $H^\kerchar$ is connected.
Then the group $\pit_1(X,x)$ is abelian and
there exists a canonical isomorphism of abelian groups
$$\Ext^0\big(\Ghat\to\Hhat,\Z\big)\,\longisoto\,\pi_1^\top(X,x)(-1).$$
\end{theorem}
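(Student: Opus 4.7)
The plan is to derive the theorem from the homotopy long exact sequence of the fibration $H(\C) \hookrightarrow G(\C) \twoheadrightarrow X(\C)$, to interpret its terms algebraically, and to compare the resulting short exact sequence for $\pi_1^\top(X,x)(-1)$ with the derived-category Ext sequence of the complex $[\Ghat \to \Hhat]$.

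First I would apply the LES of the fibration. Since $\pi_2$ of any complex Lie group vanishes, it reduces to
$$0 \to \pi_2^\top(X) \to \pi_1^\top(H) \to \pi_1^\top(G) \to \pi_1^\top(X) \to \pi_0(H) \to 1,$$
so $\pi_1^\top(X)$ is abelian and one may twist by $(-1)$ term by term. Because $G^\uu$ and $H \cap G^\uu$ are connected unipotent (unipotent elements of a closed subgroup lie in its identity component in characteristic zero), the fibers of $G/H \to G^\red/(HG^\uu/G^\uu)$ are affine spaces, which reduces us to the case $G$ reductive while preserving $\Pic(G) = 0$ and the connectedness of $H^\kerchar$. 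Then $\Pic(G) = 0$ means $G^\sss = G^\ssc$, so $G \twoheadrightarrow G^\tor$ has simply connected fiber $G^\sss$, giving $\pi_1^\top(G)(-1) = (G^\tor)_* = \Hom(\Ghat, \Z)$. The map $\pi_1^\top(H)(-1) \to \pi_1^\top(G)(-1)$ factors through $(H^\tor)_* = \Hom(\Hhat, \Z)$, because a maximal torus of $H^\sss$ has image in $G^\sss$, which is killed by $G \to G^\tor$. When $H^\kerchar$ is connected, $\pi_0(H) = \pi_0(H^\mult)$, and Cartier duality gives $\pi_0(H^\mult)(\C) \cong \Hom(\Hhat_\tors, \Q/\Z) = \Ext^1(\Hhat, \Z)$.

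Assembling these ingredients yields
$$0 \to \coker\bigl[\Hom(\Hhat,\Z) \to \Hom(\Ghat,\Z)\bigr] \to \pi_1^\top(X)(-1) \to \Ext^1(\Hhat, \Z) \to 0.$$
Exactly the same outer terms appear when one applies $R\Hom(-,\Z)$ to the distinguished triangle $\Hhat[-1] \to [\Ghat \to \Hhat] \to \Ghat$, producing
$$0 \to \coker\bigl[\Hom(\Hhat,\Z) \to \Hom(\Ghat,\Z)\bigr] \to \Ext^0([\Ghat \to \Hhat], \Z) \to \Ext^1(\Hhat, \Z) \to 0.$$

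It remains to produce a canonical isomorphism between the middle terms respecting sub and quotient. I plan to represent a class in $\Ext^0$ via the injective resolution $\Z \hookrightarrow \Q \twoheadrightarrow \Q/\Z$ as a pair $(\alpha,\beta)$ with $\alpha \in \Hom(\Ghat, \Q)$ and $\beta \in \Hom(\Hhat, \Q/\Z)$ satisfying $\pi \circ \alpha = \beta \circ i^*$, and to associate to it a loop in $X(\C)$ as follows: after clearing denominators, $N\alpha$ is a genuine cocharacter of $G^\tor$, which lifts to $\Gm \to G$ because $\Pic(G) = 0$, and the resulting path over $[0, 1/N]$ in $G$ closes up to a loop in $X$ since its endpoints differ by an element of $H^\mult$ specified by $\beta$. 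The main obstacle is this construction: verifying that the resulting map is well defined on derived-category classes and that the extension class of the topological sequence matches that of the algebraic one. A slicker alternative is to prove and use a formula $\UPic(X) \simeq [\Ghat \to \Hhat]$ for $X = G/H$ under the stated hypotheses, after which the isomorphism of the two short exact sequences follows by a formal duality argument and the Five Lemma.
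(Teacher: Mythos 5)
Your proposal sets up the homotopy exact sequence of $G(\C)\to X(\C)$ correctly and identifies the outer terms of the resulting short exact sequence, but it does not prove the theorem, and you have in effect flagged the missing piece yourself. Two short exact sequences with the same outer terms need not have isomorphic middle terms (compare $0\to\Z\xrightarrow{2}\Z\to\Z/2\to 0$ with $0\to\Z\to\Z\oplus\Z/2\to\Z/2\to 0$), so everything hinges on actually producing the map $\Ext^0\big(\Ghat\to\Hhat,\Z\big)\to\pi_1^\top(X,x)(-1)$ and verifying compatibility with the two sequences; your sketch of clearing denominators in $\Hom(\Ghat,\Q)$ and lifting a rational cocharacter is the right intuition, but it is precisely the content that has to be carried out, including well-definedness on derived-category classes, and your "slicker alternative" via $\UPic$ still presupposes the middle map before the Five Lemma can be invoked. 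A second gap: from $\pi_1^\top(G)\to\pi_1^\top(X)\to\pi_0(H)\to 1$ you conclude that $\pi_1^\top(X,x)$ is abelian, but an extension of an abelian group by an abelian group need not be abelian, and $\pi_0(H)\cong\pi_0(H^\mult)$ is genuinely nontrivial in the situations covered by the theorem; this requires a separate argument. (A minor further imprecision: for disconnected $H$ one has $(H^\tor)_*$ as a finite-index subgroup of $\Hom\big(\Hhat,\Z\big)$ rather than an equality, which must be tracked when computing the cokernel.)

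The paper circumvents both difficulties by a different route. It embeds $H^\mult$ into a torus $Q$ and forms the $Q$-torsor $Y=(G\times_\C Q)/H_Y\to X$, then the quotient $Z=Y/G_Y^\ssu$, which is a homogeneous space of a torus, finally viewed as a torsor $W$ under $G_W=G_Z/H_Z$. The isomorphism is constructed explicitly for $W$ (where it is just the cocharacter description of $\pi_1^\top$ of a torus), transported to $Z$ by a quasi-isomorphism of complexes, to $Y$ via the fibration $Y\to Z$ whose fiber $G^\ssu/H^\kerchar$ is connected and simply connected (this is exactly where $\Pic(G)=0$ and the connectedness of $H^\kerchar$ enter), and to $X$ via the torsor $Y\to X$ together with a diagram chase; abelianness of $\pi_1^\top(X,x)$ falls out because it is exhibited as a quotient of $\pi_1^\top(Y,y)\cong\pi_1^\top(W,w)$, which is the fundamental group of a torus. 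To complete your direct approach you would essentially have to reconstruct this mechanism (or an equivalent one) in order to pin down the extension class.
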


\begin{corollary}\label{cor:main}
Under the hypotheses of Theorem \ref{thm:main}:
\begin{enumerate}[\upshape (i)]
\item There is a canonical exact sequence
\begin{equation*}
\Hom\big(\Hhat,\Z\big)\labelto{i_*}\Hom\big(\Ghat,\Z\big)\lra\pi_1^\top(X,x)(-1)\lra\Hom\big(\Hhat_\tors,\Q/\Z\big)
\end{equation*}
where $i\colon H\hookrightarrow G$ is the inclusion homomorphism,
and $\Hhat_\tors$ denotes the torsion subgroup of $\Hhat$.

\item If, moreover, the subgroup $H$ is connected,
then the  exact sequence  of (i)  induces a canonical isomorphism
$$
\coker \big[H^\tor_*\labelto{i_*}G^\tor_*\big] \,\longisoto\, \pi^\top_1(X,x)(-1).
$$
\end{enumerate}
\end{corollary}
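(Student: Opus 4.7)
The plan is to derive Corollary \ref{cor:main} directly from Theorem \ref{thm:main} by unwinding the hyper-Ext group $\Ext^0(\Ghat\to\Hhat,\Z)$ via a standard long exact sequence.

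For part (i), I would start with the short exact sequence of two-term complexes
\[
0\lra[0\to\Hhat\rangle\lra[\Ghat\to\Hhat\rangle\lra[\Ghat\to 0\rangle\lra 0,
\]
where in each bracket the left term sits in degree $0$ and the right term in degree $1$. Applying $R\Hom(-,\Z)$ and extracting the relevant portion of the associated long exact sequence gives
\[
\Hom(\Hhat,\Z)\lra\Hom(\Ghat,\Z)\lra\Ext^0(\Ghat\to\Hhat,\Z)\lra\Ext^1(\Hhat,\Z)\lra\Ext^1(\Ghat,\Z),
\]
using the shift identification that for an abelian group placed in degree $1$, the $n$th hyper-Ext into $\Z$ is the ordinary $(n+1)$st Ext.

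Next, I would simplify the outer terms using the hypothesis $\Pic(G)=0$. By Corollary \ref{c:Pic-G-Gss}, the group $G^\sss$ is simply connected, so every character of $G$ factors through $G^\tor$; hence $\Ghat=\widehat{G^\tor}$ is a finitely generated free abelian group, which forces $\Ext^1(\Ghat,\Z)=0$. Since $\Hhat$ is in any case a finitely generated abelian group, $\Ext^1(\Hhat,\Z)\cong\Hom(\Hhat_\tors,\Q/\Z)$. Substituting these identifications and replacing the middle term $\Ext^0(\Ghat\to\Hhat,\Z)$ by $\pi_1^\top(X,x)(-1)$ via Theorem \ref{thm:main} produces exactly the exact sequence claimed in (i); a direct check traces the connecting map at the left end to the induced map $i_*$.

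For part (ii), the connectedness of $H$ ensures that every character of $H$ factors through $H^\tor$, so $\Hhat=\widehat{H^\tor}$ is torsion-free and the rightmost term $\Hom(\Hhat_\tors,\Q/\Z)$ of the sequence in (i) vanishes. Then $\Hom(\Hhat,\Z)=H^\tor_*$ and $\Hom(\Ghat,\Z)=G^\tor_*$ by duality for tori, and the sequence collapses to
\[
H^\tor_*\labelto{i_*}G^\tor_*\lra\pi_1^\top(X,x)(-1)\lra 0,
\]
from which the isomorphism $\coker[H^\tor_*\to G^\tor_*]\isoto\pi_1^\top(X,x)(-1)$ is immediate.

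All the genuine content sits in Theorem \ref{thm:main}; the corollary is essentially homological bookkeeping. The only point that warrants a small amount of care, rather than being a real obstacle, is checking that the connecting homomorphism in the long exact sequence does agree on the nose with the functorial map $i_*$ induced by $i\colon H\hookrightarrow G$, so that the left arrow in part (i) is correctly labeled; this follows by unwinding the definition of the connecting map for the above short exact sequence of complexes.
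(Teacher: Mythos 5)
Your argument is correct and follows essentially the same route as the paper: the short exact sequence of complexes $0\to[0\to\Hhat\rangle\to[\Ghat\to\Hhat\rangle\to[\Ghat\to0\rangle\to0$, the resulting long exact sequence, the identifications $\Ext^1(A,\Z)\cong\Hom(A_\tors,\Q/\Z)$ from Lemma \ref{lem:Ext1}, and the substitution of $\pi_1^\top(X,x)(-1)$ via Theorem \ref{thm:main}. One small inaccuracy: the vanishing of $\Ext^1\big(\Ghat,\Z\big)$ does not need $\Pic(G)=0$ or the simple connectedness of $G^\sss$ --- the group $\Ghat=\widehat{G^\tor}$ is torsion-free simply because $G$ is connected (its characters kill $G^\ssu$, which has no nontrivial characters), which is exactly the fact you already use for $\Hhat$ in part (ii).
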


Observe that  Corollary \ref{cor:main}(ii) is
a more explicit version of \cite[Theorem 8.5(i)]{BvH2}.

\begin{theorem}\label{thm:pi2}
Let $X$ be a homogeneous space of a connected linear algebraic group $G$ over $\C$.
We do not assume that  $\Pic(G)=0$.
Let $x\in X(\C)$, and set $H=\Stab_G(x)$.
Let $i\colon H\into G$ denote the inclusion homomorphism.
\begin{enumerate}[\upshape (i)]

\item If $H$ is connected, then the group $\pit_1(X,x)$ is abelian and there is a canonical isomorphism
\[\coker\big[\pia_1(H)\labelto{i_*}\pia_1(G)\big] \longisoto \pit_1(X,x)(-1).\]

\item Even without the assumption that $H$ is connected, we have a canonical isomorphism
\[\pit_2(X,x)(-1)\longisoto \ker\big[\pia_1(H)\labelto{i_*}\pia_1(G)\big].\]
\end{enumerate}
\end{theorem}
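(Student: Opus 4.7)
The plan is to combine the long exact sequence of homotopy groups associated with the fibration $H(\C)\into G(\C)\onto X(\C)$ with the known comparison between $\pi_1^\top$ and $\pia_1$ for connected linear algebraic groups. The quotient map $G(\C)\to X(\C)$ is a locally trivial principal $H(\C)$-bundle in the analytic topology, so we obtain
\[\pi_2^\top(G)\to\pi_2^\top(X)\to \pi_1^\top(H)\to\pi_1^\top(G)\to \pi_1^\top(X)\to\pi_0(H)\to\pi_0(G).\]
Two facts trim this sequence. First, $G(\C)$ deformation retracts onto a maximal compact subgroup, which is a compact connected Lie group, and a classical theorem of \'E.~Cartan gives $\pi_2^\top(G)=0$. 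Second, $\pi_0(G)=1$ by connectedness. Since $\pi_1^\top(H)=\pi_1^\top(H^0)$, we are left with
\[0\to\pi_2^\top(X)\to\pi_1^\top(H^0)\labelto{i_*}\pi_1^\top(G)\to\pi_1^\top(X)\to\pi_0(H)\to 1.\]

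I would next invoke the canonical functorial isomorphism $\pi_1^\top(G)(-1)\cong\pia_1(G)$ valid for every connected linear algebraic group over $\C$, established in \cite[Section 1]{Bor-Memoir}. Since $\Z(1)$ is a free abelian group of rank $1$, the $(-1)$-twist $\Hom(\Z(1),-)$ is exact; and $\pia_1(H)$ depends by definition only on $H^0$. Applying this functor to the previous sequence yields
\[0\to\pi_2^\top(X)(-1)\to\pia_1(H)\labelto{i_*}\pia_1(G)\to\pi_1^\top(X)(-1)\to\pi_0(H)\to 1.\]
Part (ii) is then immediate, without any hypothesis on $H$: one reads off the canonical isomorphism $\pi_2^\top(X)(-1)\isoto\ker\bigl[\pia_1(H)\labelto{i_*}\pia_1(G)\bigr]$.

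For part (i), when $H$ is connected the term $\pi_0(H)$ is trivial, so the tail of the sequence gives $\pi_1^\top(X)(-1)\cong\coker\bigl[\pia_1(H)\labelto{i_*}\pia_1(G)\bigr]$. The group $\pi_1^\top(X)$ is abelian because it is a quotient of $\pi_1^\top(G)$, which is abelian as $G(\C)$ is a topological group.

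The main obstacle is the naturality of the comparison $\pi_1^\top(G)(-1)\cong\pia_1(G)$ with respect to the inclusion $i\colon H^0\into G$: one must check that the push-forward of topological loops matches the map defined combinatorially via cocharacters of compatibly chosen maximal tori of $H^0$ and $G$. After standard d\'evissage this reduces to the case of tori, where it comes down to the explicit generator $\xi_\ii$ of $\Z(1)$. The verification is carried out in \cite{Bor-Memoir}, but it is the single delicate ingredient; everything else is formal extraction of kernel and cokernel from the fibration sequence.
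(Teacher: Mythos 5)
Your proposal is correct and follows essentially the same route as the paper: the fibration exact sequence for $H(\C)\to G(\C)\to X(\C)$, the vanishing $\pi_2^\top(G)=0$ via Cartan's theorem and the Cartan decomposition, and the comparison isomorphism $\pia_1\cong\pi_1^\top(-)(-1)$ of \cite[Prop.~1.11]{Bor-Memoir} together with its compatibility with $i_*$ (which the paper likewise records as a commuting square). The only cosmetic slip is writing $\pi_0(H)$ as a term of the sequence after applying $\Hom(\Z(1),-)$, which is not literally meaningful, but that term is not used in either part of the argument.
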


Observe that Theorem \ref{thm:pi2}(i) is stronger than Corollary \ref{cor:main}(ii)
because in  Theorem \ref{thm:pi2}(i) we do not assume that $\Pic(G)=0$.
Moreover,  our Theorem \ref{thm:pi2}(ii) is stronger than Theorem 8.5(ii) of \cite{BvH2},
where only $\pi_2^\top(X,x)(-1)$ modulo torsion was computed.

\begin{remark}
Assume that our pair $(G,X)$ comes from some pair $(G_0,X_0)$
defined over the field of real numbers $\R$:
$(G,X)=(G_0,X_0)\times_\R\C$.
Here $G_0$ is a connected linear algebraic group over $\R$,
and  $X_0$ is a real algebraic variety on which $G_0$ acts over $\R$.
Let $\Gamma=\Gal(\C/\R)=\{1,\gamma\}$ denote the Galois group of $\C$ over $\R$
where $\gamma$ denotes complex conjugation.
Assume that $X_0$ has an $\R$-point $x_0$,
that is, a point $x_0\in X_0(\C)=X(\C)$ such that $^\gamma\hm x_0=x_0$.
Since $\Gamma$ acts on $X_0(\C)=X(\C)$ continuously and preserves $x_0$,
it naturally acts on the abelian groups $\pi_1^\top(X,x_0)$ and $\pi_2^\top(X,x_0)$.
Write $\Z(1)=\pit_1(\GmR(\C),1)$ and define abelian groups
$\pit_n(X,x)(-1)$ for $n=1,2$   by formula \eqref{e:-1}.
Then $\Gamma$ acts (nontrivially) on $\Z(1)$, and so
it acts on $\pit_n(X,x)(-1)$.
Note that $\pit_n(X,x_0)(-1)$ and $\pit_n(X,x_0)$ are isomorphic as abelian groups,
but in general they are not isomorphic as $\Gamma$-modules.

Let $H_0=\Stab_{G_0}(x_0)$; then $H_0$ is a linear algebraic $\R$-group.
Set $H=H_0\times_\R\C$.
The Galois group $\Gamma$ naturally  acts on $\Hhat$ and on $\Ghat$.
We can choose the torus $T_H^\red$ to be defined over $\R$;
then $\Gamma$ acts on $\pia_1(H)$, and similarly it acts on $\pia_1(G)$.
Now the canonical isomorphisms of abelian groups in Theorem \ref{thm:main},
Corollary \ref{cor:main}, and Theorem \ref{thm:pi2}
are actually isomorphisms of $\Gamma$-modules.
\end{remark}

Related results were obtained by Demarche \cite{Demarche}
over an algebraically closed field $k$ {\em of arbitrary characteristic.}

In the case $k=\C$, a result of \cite{Demarche} gives the profinite completion
of the group $\pi_1^\top(X,x)(-1)$.
To be more precise, let $\Zhat$ denote the profinite completion of $\Z$.
Let $k$ be an algebraically closed field {\em of characteristic} 0.
Let $X$ be a $k$-variety and $x\in X(k)$ be a $k$-point.
We write $\piet(X,x)$ for the \'etale fundamental group of $(X,x)$.
Set
\[\Zhat(1)=\piet(\Gmk,1),\qquad \piet(X,x)(-1)=\Hom\big(\Zhat(1),\piet(X,x)\big).\]
Then Theorem 1.4 of \cite{Demarche} in the case of characteristic 0
says that under the hypotheses of our Theorem \ref{thm:main},
there is a canonical isomorphism
\[\piet(X,x)(-1)\,\longisoto\, \Ext^0\big(\Ghat\to\Hhat,\Z\big)\otimes_\Z \Zhat.\]
When $k=\C$, the profinite  group $\piet(X,x)(-1)$ is isomorphic to the profinite completion
$\pi_1^\top(X,x)(-1)^\wedge$ of the finitely generated abelian group $\pi_1^\top(X,x)(-1)$,
and we obtain an isomorphism
\begin{equation}\label{e:Demarche}
\pi_1^\top(X,x)(-1)^\wedge\ \simeq\ \Ext^0\big(\Ghat\to\Hhat,\Z\big)\otimes_\Z \Zhat.
\end{equation}
Since $\pi_1^\top(X,x)(-1)$ and $\Ext^0\big(\Ghat\to\Hhat,\Z\big)$ are finitely generated abelian groups,
it follows from \eqref{e:Demarche} that they are isomorphic.
However, from \eqref{e:Demarche} we do not immediately obtain the  canonical isomorphism of  our Theorem \ref{thm:main}.
Thus in our case $k=\C$, we obtain a stronger result by elementary methods
(we use only the exact sequence of homotopy groups associated to a fibration).

Observe that the constructions of the corresponding isomorphisms
in Theorem \ref{thm:main} of the present article and in \cite{Demarche} are based on the same principles
(these constructions first appeared in the preprint \cite{BD} by Demarche and the author).
The difference is that we use technical tools of the classical (topological) homotopy theory,
whereas Demarche \cite{Demarche} uses tools of the \'etale homotopy theory.

The plan of the article is as follows.
In Section \ref{s:extensions} we discuss the functor $\Ext^0$.
In Section \ref{s:Picard}, for a connected $\C$-group $G$ we compare the conditions $\Pic(G)=0$ and $\pit_1(G^\ssu)=1$.
In  Section \ref{s:Aux} we recall constructions of auxiliary groups and  homogeneous spaces.
In Section \ref{s:Proofs} we prove Theorem \ref{thm:main} and Corollary \ref{cor:main}.
In Section \ref{s:pi2} we prove Theorem \ref{thm:pi2}.

\section{The functor $\Ext^0$}

\label{s:extensions}
In this section we consider the functor $\Ext^0(A^0\labelto\alpha A^1,\,\Z)$
where $\alpha\colon A^0\to A^1$ is a homomorphism of abelian groups.

Let $\Kbul$ be a bounded complex in an abelian category $\AA$, and let  $B$ be an object of  $\AA$.
Define
$$
\Ext_\AA^i(\Kbul,B):=\Hom_{D^b(\AA)}(\Kbul,B[i])
$$
where $D^b(\AA)$ is the derived category of bounded complexes in $\AA$,
and $B[i]$ is the complex consisting  of one object $B$ in degree $-i$.
If $A$ is an object of $\AA$, we have
$$
\Ext_\AA^i(A[0],B)=\Hom_{D^b(\AA)}(A[0],B[i])=:\Ext_\AA^i(A,B);
$$
see Gelfand and Manin \cite[Definition III.5.3]{GM}.
By definition, $\Ext_\AA^0(A,B)=\Hom_\AA(A,B)$.

We consider the category $\mathcal Ab$ of abelian groups and write $\Ext^i$ for  $\Ext^i_{\mathcal Ab}$.
Let $A$ be a finitely generated abelian group.
We write $A_\tors$ for the torsion subgroup of $A$,
and we set $A_\tf:=A/A_\tors$; then $A_\tf$ is a torsion-free finitely generated abelian group,
and therefore, it is a free abelian group.
It is clear that
 $$\Ext^0(A,\Z)=\Hom(A,\Z)=\Hom(A_\tf,\Z).$$

\begin{lemma}[well-known]\label{lem:Ext1}
For a finitely generated abelian group $A$, we have
$$\Ext^1(A,\Z)\cong\Hom(A_\tors,\Q/\Z).$$
\end{lemma}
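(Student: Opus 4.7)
The plan is to apply $\Hom(A,-)$ to the short exact sequence
\[
0 \lra \Z \lra \Q \lra \Q/\Z \lra 0
\]
and extract the statement from the resulting long exact sequence. Since $\Q$ is a divisible abelian group, it is injective in the category $\mathcal{A}b$, so $\Ext^1(A,\Q)=0$. The long exact sequence therefore gives
\[
\Hom(A,\Q) \lra \Hom(A,\Q/\Z) \lra \Ext^1(A,\Z) \lra 0,
\]
reducing the proof to identifying the image of the left-hand map.

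Next, I would observe that any homomorphism from a torsion group to the torsion-free group $\Q$ vanishes, so every $f\colon A\to\Q$ factors through the projection $A\onto A_\tf$. Conversely, since $A$ is finitely generated the exact sequence $0\to A_\tors\to A\to A_\tf\to 0$ splits (as $A_\tf$ is free, hence projective), so any map $A_\tf\to\Q/\Z$ lifts along $\Q\onto\Q/\Z$, and such a lift extends by zero on $A_\tors$ to a homomorphism $A\to\Q$. Consequently, the image of $\Hom(A,\Q)\to\Hom(A,\Q/\Z)$ is precisely $\Hom(A_\tf,\Q/\Z)$, viewed as a subgroup of $\Hom(A,\Q/\Z)$ via the projection $A\onto A_\tf$.

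Finally, using the splitting $A\cong A_\tors\oplus A_\tf$, I would compute
\[
\Hom(A,\Q/\Z)\big/\Hom(A_\tf,\Q/\Z) \,\cong\, \Hom(A_\tors,\Q/\Z),
\]
which combined with the exact sequence above yields the desired canonical isomorphism $\Ext^1(A,\Z)\cong\Hom(A_\tors,\Q/\Z)$. The result is classical, so I expect no real obstacles; the only point requiring minor care is the verification that the image of $\Hom(A,\Q)\to\Hom(A,\Q/\Z)$ really equals $\Hom(A_\tf,\Q/\Z)$, which rests on the freeness of $A_\tf$.
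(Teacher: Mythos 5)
Your proof is correct and follows essentially the same route as the paper: apply $\Hom(A,-)$ to the injective resolution $0\to\Z\to\Q\to\Q/\Z\to 0$, use $\Ext^1(A,\Q)=0$, and identify the cokernel of $\Hom(A,\Q)\to\Hom(A,\Q/\Z)$ with $\Hom(A_\tors,\Q/\Z)$ via the factorization through $A_\tf$ and the surjectivity of $\Hom(A_\tf,\Q)\to\Hom(A_\tf,\Q/\Z)$. The only cosmetic difference is that you invoke a chosen splitting $A\cong A_\tors\oplus A_\tf$ at the last step, whereas the paper gets the same identification canonically from the exact sequence $0\to A_\tors\to A\to A_\tf\to 0$.
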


\begin{proof}
From the injective resolution of $\Z$
$$
0\to\Z\to\Q\to\Q/\Z\to 0
$$
we obtain an exact sequence
\[  \Hom(A,\Q)\,\to\, \Hom(A,\Q/\Z)\,\to\, \Ext^1(A,\Z)\,\to\, \Ext^1(A,\Q)=0,\]
whence
$$
\Ext^1(A,\Z)\cong\coker\big[\Hom(A,\Q)\to\Hom(A,\Q/\Z)\big].
$$
We have
\begin{multline*}
\coker\big[\Hom(A,\Q)\to\Hom(A,\Q/\Z)\big]=\coker\big[\Hom(A_\tf,\Q)\to \Hom(A,\Q/\Z)\big]\\
                      =\coker\big[\Hom(A_\tf,\Q/\Z)\to \Hom(A,\Q/\Z)\big]\cong \Hom(A_\tors,\Q/\Z),
\end{multline*}
which proves the lemma.
\end{proof}

\begin{corollary}\label{c:Ext0}
For a homomorphism of finitely generated abelian groups  $\alpha\colon A^0\to A^1$,
the abelian group $\Ext^0(A^0\to A^1,\Z)$ fits into a canonical exact sequence
\begin{multline*}
\Hom(A^1,\Z)\labelt{\alpha^*}\Hom(A^0,\Z)\to
\Ext^0(A^0\to A^1,\Z)\to\\
\Hom(A^1_\tors\hs,\hs\Q/\Z)\labelto{\alpha^*}\Hom(A^0_\tors\hs,\hs\Q/\Z).
\end{multline*}
\end{corollary}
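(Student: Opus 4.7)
The plan is to derive the exact sequence from the short exact sequence of complexes expressing $\Kbul := [A^0\labelt\alpha A^1\rangle$ as an extension, apply the contravariant functor $R\Hom(-,\Z)$, take cohomology, and substitute Lemma \ref{lem:Ext1}.

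First I would write down the short exact sequence of complexes
\[0\lra A^1[-1]\lra \Kbul \lra A^0[0]\lra 0\]
(verified degree by degree: in degree $0$ only $A^0$ appears, in degree $1$ only $A^1$ appears). This yields a distinguished triangle
\[A^1[-1]\lra \Kbul\lra A^0\lra A^1\]
in $D^b(\mathcal{A}b)$, whose connecting morphism $A^0\to A^1$ is (up to sign) the map $\alpha$ itself.

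Next I would apply $R\Hom(-,\Z)$, which is contravariant, and rotate to obtain a distinguished triangle
\[R\Hom(A^0,\Z)\lra R\Hom(\Kbul,\Z)\lra R\Hom(A^1[-1],\Z)\lra R\Hom(A^0,\Z)[1].\]
Taking $H^0$ produces a long exact sequence. The shift identity $R\Hom(A^1[-1],\Z)=R\Hom(A^1,\Z)[1]$ gives $H^i(R\Hom(A^1[-1],\Z))=\Ext^{i+1}(A^1,\Z)$, so the relevant segment reads
\[\Ext^0(A^1,\Z)\labelto{\alpha^*}\Ext^0(A^0,\Z)\lra \Ext^0(\Kbul,\Z)\lra \Ext^1(A^1,\Z)\labelto{\alpha^*}\Ext^1(A^0,\Z).\]
Finally, substituting $\Ext^0(A^i,\Z)=\Hom(A^i,\Z)$ on the left and invoking Lemma \ref{lem:Ext1} to rewrite $\Ext^1(A^i,\Z)\cong \Hom(A^i_\tors,\Q/\Z)$ on the right yields exactly the advertised exact sequence. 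Canonicity and naturality of $\alpha^*$ follow from the functoriality of the whole construction in the map $\alpha$.

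I do not expect a serious obstacle: the only minor care is bookkeeping of the shifts and signs in the derived category (so that the boundary maps are genuinely induced by $\alpha^*$, not by $-\alpha^*$, though this is irrelevant for exactness), and verifying that the isomorphism in Lemma \ref{lem:Ext1} is natural enough that the composed map $\Ext^1(A^1,\Z)\to\Ext^1(A^0,\Z)$ identifies with $\alpha^*\colon \Hom(A^1_\tors,\Q/\Z)\to\Hom(A^0_\tors,\Q/\Z)$; this is immediate from the construction in Lemma \ref{lem:Ext1} via the injective resolution $0\to\Z\to\Q\to\Q/\Z\to 0$.
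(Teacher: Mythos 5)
Your proposal is correct and follows essentially the same route as the paper: the short exact sequence of complexes $0\to A^1[-1]\to \Kbul\to A^0[0]\to 0$ you write down is exactly the paper's $0\to(0\to A^1)\to(A^0\to A^1)\to(A^0\to 0)\to 0$, and both proofs then take the resulting long exact sequence of $\Ext$ groups and substitute Lemma \ref{lem:Ext1}. Your extra remarks on shifts, signs, and the naturality of the identification $\Ext^1(A,\Z)\cong\Hom(A_\tors,\Q/\Z)$ are sound but not a departure from the paper's argument.
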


\begin{proof}
The short exact sequence of complexes
\[0\to (0\to A^1)\to (A^0\to A^1)\to (A^0\to 0)\to 0\]
gives rise to an exact sequence
\[\Hom(A^1,\Z)\labelt{\;\alpha^*}\Hom(A^0,\Z)\to
\Ext^0(A^0\to A^1,\Z)\to
\Ext^1(A^1,\Z)\to\Ext^1(A^0,\Z).\]
Applying  Lemma \ref{lem:Ext1}, we obtain the exact sequence of the corollary.
\end{proof}

Let $\alpha\colon A^0\to A^1$
be a homomorphism of finitely generated  abelian groups where $A^0$ is torsion-free.
We wish to compute  $\Ext^0(A^0\to A^1,\Z)$.

Choose a surjective homomorphism  $\varphi^1\colon B^1\to A^1$
where $B^1$ is a finitely generated {\em torsion-free} abelian group.
Set
\[B^0=A^0\times_{A^1} B^1=\big\{(a_0,b_1)\in A^0\times B^1\ \,\big|\ \,\alpha(a_0)=\varphi^1(b_1)\big\}.\]
We have a canonical homomorphism
\[\beta\colon B^0\to B^1,\quad (a_0,b_1)\mapsto b_1.\]

\begin{lemma}
$\Ext^0(A^0\to A^1,\Z)\,\cong\,
   \coker\big[\Hom(B^1,\Z)\labelto{\beta^*}\Hom(B^0,\Z)\big].$
\end{lemma}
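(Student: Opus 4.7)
The plan is to realize $\big[B^0 \labelto{\beta} B^1\big\rangle$ as a quasi-isomorphic replacement of $\big[A^0 \labelto{\alpha} A^1\big\rangle$ by a complex of \emph{free} abelian groups; once this is done, $\Ext^0$ against $\Z$ can be computed termwise and yields the claimed cokernel.

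First I would construct a morphism of two-term complexes from $\big[B^0 \labelto{\beta} B^1\big\rangle$ to $\big[A^0 \labelto{\alpha} A^1\big\rangle$ whose degree-$0$ component is the first projection $\varphi^0\colon B^0\to A^0,\ (a_0,b_1)\mapsto a_0$, and whose degree-$1$ component is the given surjection $\varphi^1\colon B^1\to A^1$. The relevant square commutes by the defining condition of the fiber product: $\alpha(a_0)=\varphi^1(b_1)$ for every $(a_0,b_1)\in B^0$. Next I would verify that this morphism is a quasi-isomorphism. Indeed, $\ker\beta=\{(a_0,0):a_0\in\ker\alpha\}$ and $\varphi^0$ identifies this group with $\ker\alpha$. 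Moreover, $\im\beta=(\varphi^1)^{-1}(\im\alpha)\subseteq B^1$, and the surjectivity of $\varphi^1$ then forces the induced map $B^1/\im\beta\longisoto A^1/\im\alpha$ to be an isomorphism.

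I would then observe that $B^1$ is free by hypothesis, while $B^0$ is free as well because it is a subgroup of the finitely generated torsion-free abelian group $A^0\times B^1$. Hence $\big[B^0 \labelto{\beta} B^1\big\rangle$ is a bounded complex of projective $\Z$-modules quasi-isomorphic to $\big[A^0 \labelto{\alpha} A^1\big\rangle$, so
\[
\Ext^0\big(A^0\to A^1,\,\Z\big)\,\cong\,\Ext^0\big(B^0\to B^1,\,\Z\big)
\]
can be computed as the $0$-th cohomology of the Hom complex $\big[\Hom(B^1,\Z) \labelto{\beta^*} \Hom(B^0,\Z)\big\rangle$ with $\Hom(B^1,\Z)$ placed in degree $-1$, namely $\coker\beta^*$, which is the desired group.

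The only nontrivial step is verifying the quasi-isomorphism, but this reduces to the direct kernel/cokernel calculation sketched above, using the fiber-product definition of $B^0$ together with the surjectivity of $\varphi^1$; the remainder is formal homological algebra (replacement of a complex by a projective resolution in $D^b(\mathcal{A}b)$ and termwise computation of $\Hom$).
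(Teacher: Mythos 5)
Your proof is correct and follows essentially the same route as the paper: the same fiber-product morphism of complexes $(\varphi^0,\varphi^1)$ and the same quasi-isomorphism reduction (which the paper leaves as "one can easily check"), with your explicit kernel/cokernel verification filling in that step. The only difference is in the final bookkeeping: the paper concludes by applying its Corollary \ref{c:Ext0} together with the torsion-freeness of $B^0$ and $B^1$, whereas you use that the replacement complex consists of projectives and compute $\Ext^0$ as $H^0$ of the Hom complex; both are standard and equivalent.
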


\begin{proof}
We have a commutative diagram
\[
\xymatrix{
B^0\ar[r]^-\beta\ar[d]_-{\varphi^0}   &B^1\ar[d]^-{\varphi^1} \\
A^0\ar[r]^-\alpha                     &A^1
}
\]
where $\varphi^0(a_0,b_1)=a_0$.
We obtain a morphism of complexes
\[\varphi=(\varphi^0,\varphi^1)\colon\, (B^0\to B^1)\,\lra\,(A^0\to A^1).\]
One can easily check that $\varphi$ is a quasi-isomorphism, and therefore the induced homomorphism
\[\varphi^*\colon\Ext^0(A^0\to A^1,\Z)\,\lra\, \Ext^0(B^0\to B^1,\Z)\]
is an isomorphism.
Since both $A^0$ and $B^1$ are torsion-free, we see that $B^0$ is torsion-free as well.
By Corollary \ref{c:Ext0} we have an exact sequence
\[\Hom(B^1,\Z)\labelto{\;\alpha^*}\Hom(B^0,\Z)\lra \Ext^0(B^0\to B^1,\Z)\to 0,\]
and the lemma follows.
\end{proof}

\section{The Picard group and the topological fundamental group}
\label{s:Picard}

In this section we show that for a connected linear algebraic group $G$ over $\C$,
the condition $\Pic(G)=0$ is equivalent to $\pit_1(G^\ssu)=1$.

\begin{lemma}
\label{l:Pic-Gss}
Let $G$ be a connected linear algebraic group over an algebraically closed
field $k$ of characteristic 0. Then there are canonical isomorphisms
\[ \Pic(G)\longisoto \Pic(G^\ssu)\longisoto \Pic(G^\sss).\]
\end{lemma}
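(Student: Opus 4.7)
The plan is to split the proof into two steps: first reducing modulo the unipotent radical, and then handling the remaining reductive case via a Rosenlicht--Sansuc computation together with the cocharacter-lattice description of the Picard group.

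For the unipotent reduction, I would consider the two short exact sequences
\[1 \to G^\uu \to G \to G^\red \to 1 \qquad\text{and}\qquad 1 \to G^\uu \to G^\ssu \to G^\sss \to 1,\]
each exhibiting the source as a $G^\uu$-torsor over the target. In characteristic $0$, the connected unipotent group $G^\uu$ is isomorphic as a variety to an affine space, and any $G^\uu$-torsor over an affine variety is trivial. Since $G^\red$ and $G^\sss$ are affine, both projections are isomorphic (as varieties) to trivial affine-space bundles, so by homotopy invariance of the Picard group the pullbacks $\Pic(G^\red) \isoto \Pic(G)$ and $\Pic(G^\sss) \isoto \Pic(G^\ssu)$ are isomorphisms. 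Since $G^\ssu$ is the preimage of $G^\sss$ under $G \to G^\red$, the square
\[\xymatrix{G^\ssu \ar[r] \ar[d] & G \ar[d] \\ G^\sss \ar[r] & G^\red}\]
is Cartesian, so the induced square of Picard groups commutes. Hence it suffices to prove that the restriction $\Pic(G^\red) \to \Pic(G^\sss)$ along the inclusion $G^\sss \hookrightarrow G^\red$ is an isomorphism.

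For the reductive step, I would apply the Rosenlicht--Sansuc exact sequence to $1 \to G^\sss \to G^\red \to G^\tor \to 1$, obtaining
\[\Ghat^\red \to \Ghat^\sss \to \Pic(G^\tor) \to \Pic(G^\red) \to \Pic(G^\sss) \to \Br(G^\tor).\]
Here $\Ghat^\sss = 0$ because semisimple groups admit no nontrivial characters, and $\Pic(G^\tor) = 0$ because $G^\tor$ is a torus over the algebraically closed field $k$; together these yield injectivity of $\Pic(G^\red) \hookrightarrow \Pic(G^\sss)$. For surjectivity I would invoke the classical cocharacter-lattice description: for a connected reductive $k$-group $H$ with maximal torus $T_H^\red$ and preimage $T_H^\ssc$ in $H^\ssc$,
\[\Pic(H) \cong \coker\bigl[\Hom(T_H^\red,\Gmk) \to \Hom(T_H^\ssc,\Gmk)\bigr].\]
Applied to $G^\red$ and to $G^\sss$ (sharing the common $G^\ssc$, with $T^\sss := T^\red \cap G^\sss$), and using that the restriction $\Hom(T^\red,\Gmk) \onto \Hom(T^\sss,\Gmk)$ is surjective with kernel $\Hom(G^\tor,\Gmk)$, the two cokernels coincide, and the induced restriction $\Pic(G^\red) \to \Pic(G^\sss)$ is the required isomorphism.

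The main obstacle is the surjectivity in the reductive step: the Rosenlicht--Sansuc sequence by itself supplies only injectivity, because $\Br(G^\tor)$ need not vanish for tori of rank $\geq 2$. The essential extra input is the cocharacter-lattice formula above---or, equivalently, a $z$-extension argument reducing to the simply connected case where $\Pic(G^\ssc) = 0$---which ensures the connecting map $\Pic(G^\sss) \to \Br(G^\tor)$ vanishes. The unipotent-radical reduction in the first step is, by contrast, essentially formal.
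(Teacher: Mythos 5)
Your argument is correct, and it reaches the conclusion by a genuinely different route than the paper. The paper applies Sansuc's exact sequence $\widehat{G'}\to\Pic(G'')\to\Pic(G)\to\Pic(G')\to 0$ (his (6.11.2)) twice: to $1\to G^\ssu\to G\to G^\tor\to 1$, where $\Pic(G^\tor)=0$ yields $\Pic(G)\isoto\Pic(G^\ssu)$, and to $1\to G^\uu\to G^\ssu\to G^\sss\to 1$, where $\widehat{G^\uu}=0$ and $\Pic(G^\uu)=0$ yield $\Pic(G^\sss)\isoto\Pic(G^\ssu)$. In particular, the form of the sequence cited there already terminates in $\Pic(G')\to 0$, so the surjectivity that you correctly identify as the delicate point is supplied by the citation itself, and no detour through $G^\red$ or through the character-lattice description of $\Pic$ of a reductive group is needed. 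Your substitutes are all sound: triviality of $G^\uu$-torsors over affine bases (by d\'evissage to $\Ga$ and vanishing of coherent $H^1$) together with homotopy invariance of $\Pic$ handles the unipotent reductions, the Cartesian square gives the required compatibility of restriction maps, and the computation identifying $\coker\big[\widehat{T^\red}\to\widehat{T^\ssc}\big]$ with $\coker\big[\widehat{T^\sss}\to\widehat{T^\ssc}\big]$ via the surjectivity of $\widehat{T^\red}\to\widehat{T^\sss}$ is correct. What your version buys is independence from the precise shape of the tail of Sansuc's sequence; what it costs is reliance on the Picard formula for a general connected reductive group, a classical fact but one of essentially the same depth as the reductive case of the lemma itself (the paper needs only the semisimple case $\Pic(G^\sss)\cong\Hom(\vk,\Gmk)$ with $\vk=\ker\big[G^\ssc\to G^\sss\big]$, and only later, in Lemma \ref{l:Pic-sc}). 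Both proofs produce the same canonical maps, namely restriction along $G^\ssu\into G$ and pullback along $G^\ssu\onto G^\sss$.
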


\begin{proof}
A short exact sequence of connected  linear algebraic groups over an algebraically closed
field of characteristic 0
\[1\to G'\to G\to G''\to 1\]
gives rise to an exact sequence
\begin{equation}\label{e:Pic-G''}
 \widehat{G'}\to\Pic(G'')\to \Pic(G)\to \Pic(G')\to 0;
\end{equation}
see Sansuc \cite[(6.11.2)]{Sansuc}.
Applying \eqref{e:Pic-G''} to the short exact sequence
\[1\to G^\ssu\to G\to G^\tor\to 1,\]
we obtain an exact sequence
\[\Pic(G^\tor)\to\Pic(G)\to \Pic(G^\ssu)\to 0.\]
Taking into account that $\Pic(G^\tor)=0$
(see, for instance, \cite[Lemma 6.9(ii)]{Sansuc}\hs),
we obtain an isomorphism $\Pic(G)\longisoto\Pic(G^\ssu)$.
Applying \eqref{e:Pic-G''} to the short exact sequence \eqref{e:ssu},
we obtain an exact sequence
\[ \widehat{G^\uu}\to\Pic(G^\sss)\to \Pic(G^\ssu)\to \Pic(G^\uu).\]
Taking into account that $\Pic(G^\uu)=0$
(because in characteristic 0 the variety of the unipotent group $G^\uu$
is isomorphic to the affine space ${\rm Lie}(G^\uu)$ via the logarithm map)
and that $\widehat{G^\uu}=0$,
we obtain an isomorphism $\Pic(G^\sss)\longisoto\Pic (G^\ssu)$,
which completes the proof.
\end{proof}

\begin{lemma}
\label{l:Pic-sc}
For $G$ as in Lemma \ref{l:Pic-Gss}, we have $\Pic(G)=0$
if and only if $G^\sss$ is simply connected.
\end{lemma}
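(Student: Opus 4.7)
The plan is a two-step reduction followed by a classical input. First, by Lemma \ref{l:Pic-Gss} we already have $\Pic(G)\cong\Pic(G^\sss)$, so the question collapses to the statement that for a connected semisimple $k$-group $H$ one has $\Pic(H)=0$ if and only if $H$ is simply connected. Second, I would compare $H$ with its universal cover $H^\ssc$ via the central isogeny
\[1\to F\to H^\ssc\to H\to 1,\]
where $F=\ker[H^\ssc\to H]$ is a finite central subgroup, automatically of multiplicative type in characteristic~$0$ (as a subgroup of the maximal torus $T_H^\ssc$).

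The key external ingredient is the classical theorem (due to Popov, and also proved by Fossum--Iversen) that the Picard group of a simply connected semisimple group over an algebraically closed field vanishes: $\Pic(H^\ssc)=0$. Together with an analogue of Sansuc's sequence \eqref{e:Pic-G''} applied to the central isogeny above, and using that semisimple groups admit no nontrivial characters so that $\widehat{H}=0=\widehat{H^\ssc}$, I expect an exact sequence
\[0\to\widehat{F}\to\Pic(H)\to\Pic(H^\ssc)=0,\]
yielding a canonical isomorphism $\Pic(H)\cong\widehat{F}$. Since $F$ is a finite abelian group in characteristic $0$, one has $F=1\Leftrightarrow\widehat{F}=0$, and by the very definition of $H^\ssc$, the group $H$ is simply connected exactly when $F=1$. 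Combining these equivalences gives $\Pic(H)=0\Leftrightarrow H\text{ is simply connected}$, as required.

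The main obstacle is justifying the exact sequence above, since \eqref{e:Pic-G''} was quoted only for short exact sequences of \emph{connected} linear algebraic groups, whereas here the kernel $F$ is finite. I would handle this either by appealing to the more general form of Sansuc's exact sequence in \cite{Sansuc} (which also covers extensions by finite groups of multiplicative type), or by giving a direct argument: view $H^\ssc\to H$ as an $F$-torsor, use the associated-line-bundle construction to define $\widehat{F}\to\Pic(H)$, observe that it factors through $\ker[\Pic(H)\to\Pic(H^\ssc)]$, and check injectivity using $\widehat{H^\ssc}=0$. Modulo this standard verification, the rest of the argument is essentially formal.
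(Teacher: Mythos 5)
Your argument is correct and follows essentially the same route as the paper: both first reduce to $G^\sss$ via Lemma \ref{l:Pic-Gss} and then rest on the identification $\Pic(G^\sss)\cong\Hom\big(\ker[G^\ssc\to G^\sss],\Gmk\big)$. The only difference is that the paper simply cites this identification from Sansuc \cite[Lemma 6.9(iii)]{Sansuc} (with Popov \cite{Popov} as an alternative reference), whereas you rederive it from $\Pic(G^\ssc)=0$ and the version of Sansuc's exact sequence for central isogenies with finite multiplicative kernel --- a valid, if longer, justification of the same key input.
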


\begin{proof}
By Lemma \ref{l:Pic-Gss},  we have $\Pic(G)\cong\Pic(G^\sss)$.
Set $\vk=\ker\big[G^\ssc\to G^\sss\big]$, which is a finite abelian group.
By \cite[Lemma 6.9(iii)]{Sansuc} we have  $\Pic(G^\sss)\cong\Hom(\vk_G, \Gmk)$,
whence we obtain that $\Pic(G)\cong\Hom(\vk_G,  \Gmk)$.
It follows that $\Pic(G)=0$ if and only if $\vk=\{1\}$, that is,
$G^\sss$ is simply connected, as required.

For another proof see Popov \cite{Popov}, who proves that for a connected linear algebraic group $G$
over an algebraically closed field $k$ {\em of arbitrary characteristic},
there is a canonical isomorphism $\Pic(G)\cong\Hom(\vk_G, \Gmk)$.
\end{proof}

\begin{lemma}
\label{l:sc-Lie-alg}
Let $G$ be a connected semisimple $\C$-group.
Then the Lie group $G(\C)$ is simply connected if and only if the algebraic group $G$ is simply connected.
\end{lemma}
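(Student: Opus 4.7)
My strategy is to show that the algebraic and topological universal covers of $G$ coincide. Consider the algebraic isogeny $\rho\colon G^\ssc\to G$ from the introductory notation; its kernel $\vk$ is a finite central $\C$-subgroup of $G^\ssc$, and in characteristic~$0$ this $\vk$ is reduced, so $\vk(\C)=\vk$ as abstract groups. On $\C$-points, $\rho$ induces a surjective homomorphism $\rho_\C\colon G^\ssc(\C)\to G(\C)$ of connected complex Lie groups with finite discrete kernel $\vk$, hence a finite covering map. Granting for a moment that $G^\ssc(\C)$ is topologically simply connected, $\rho_\C$ is the topological universal cover of $G(\C)$, yielding a canonical isomorphism $\pit_1(G(\C),1)\cong\vk$. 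It then follows that
\[G\text{ is algebraically simply connected}\ \Longleftrightarrow\ \vk=1\ \Longleftrightarrow\ \pit_1(G(\C),1)=1,\]
which is precisely the lemma. The ``easy'' half is moreover immediate without the above assumption: if $G(\C)$ is simply connected, then the connected finite cover $G^\ssc(\C)\to G(\C)$ must be trivial, so $\vk=1$, i.e.\ $G=G^\ssc$.

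The main obstacle is therefore the claim that if $G=G^\ssc$ is algebraically simply connected, then $G(\C)$ is topologically simply connected. I would proceed as follows. Choose a maximal compact subgroup $K\subset G(\C)$; by the Cartan (polar) decomposition, $G(\C)$ is diffeomorphic to $K\times\mathfrak p$ as a real analytic manifold, so $\pit_1(G(\C),1)=\pit_1(K,1)$, and the task reduces to showing that the compact semisimple Lie group $K$ is simply connected. Since $K$ is compact semisimple, $\pit_1(K,1)$ is finite and the universal cover $\widetilde K\to K$ is again a compact (semisimple) Lie group by Weyl's theorem. Its complexification $\widetilde K_\C$ is a connected semisimple complex Lie group, which by Chevalley's theorem carries a unique structure of a complex algebraic group $\widetilde G$; the cover $\widetilde K\to K$ then extends to an algebraic isogeny $\widetilde G\to G=G^\ssc$, and the algebraic simple connectedness of $G^\ssc$ forces this isogeny to be an isomorphism. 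Hence $\widetilde K=K$, so $\pit_1(K,1)=1$ as desired.

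An alternative route to the same conclusion, avoiding Weyl's theorem and Chevalley's algebraization, would be to invoke the classification of connected simply connected simple complex algebraic groups and verify topological simple connectedness of the standard compact real forms $SU(n)$, $\mathrm{Spin}(n)$, $Sp(n)$, and the compact forms of the exceptional types $G_2$, $F_4$, $E_6$, $E_7$, $E_8$ case by case (for the classical series, this follows by induction from the sphere fibrations $K(n-1)\hookrightarrow K(n)\twoheadrightarrow S^{d}$). Either approach packages together the familiar algebraic/topological dictionary for complex semisimple groups, and this packaging is the real content of the lemma.
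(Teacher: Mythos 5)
Your proof is correct, but it takes a genuinely different route from the paper. The paper matches both sides of the equivalence against a single combinatorial invariant: fixing a maximal torus $T\subset G$, it quotes that the algebraic group $G$ is simply connected iff the root datum $\RD(G,T)$ satisfies $X=P$ (Springer), and separately that the Lie group $G(\C)$ is simply connected iff the same condition holds (Gorbatsevich--Onishchik--Vinberg, or Conrad), so the lemma is a two-line consequence of the dictionary between algebraic groups and root data. You instead reduce everything to the covering $\rho_\C\colon G^\ssc(\C)\to G(\C)$ with finite kernel $\vk$: your ``easy'' direction (topologically simply connected $\Rightarrow$ $\vk=1$) is genuinely elementary, and your hard direction replaces the root-datum citation by the chain Cartan decomposition $\to$ Weyl's finiteness theorem $\to$ compactness of $\widetilde K$ $\to$ Chevalley algebraization of $\widetilde K_\C$ $\to$ the defining property of $G^\ssc$. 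Both arguments are correct and lean on classical black boxes of comparable weight; yours has the merit of isolating exactly where each hypothesis is used (in particular it exhibits $\pit_1(G(\C),1)\cong\vk$ canonically, which is more information than the lemma asserts and is essentially \cite[Prop.~1.11]{Bor-Memoir} in the semisimple case), while the paper's version is shorter and avoids the real-group detour through maximal compact subgroups --- a detour the paper postpones to Section \ref{s:pi2}, where the Cartan decomposition and Cartan's theorem are invoked anyway to show $\pit_2(G)=0$. One small point to make explicit if you flesh this out: the holomorphic isogeny $\widetilde K_\C\to G(\C)$ is automatically algebraic (any holomorphic homomorphism between complex semisimple algebraic groups is), which is what lets you feed it into the algebraic definition of simple connectedness.
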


\begin{proof}
Let $G$ be a connected semisimple $\C$-group.
Recall that $G$ is called {\em simply connected} if any isogeny $G'\to G$,
where $G'$ is a semisimple group, is an isomorphism.
Let $T\subset G$ be a maximal torus, and consider
the root datum $\RD(G,T)=(X,X^\vee, R, R^\vee)$;
see Springer  \cite[Sections 1 and 2]{Springer}.
A root datum is called {\em simply connected} if $X=P$
where $X=\That$ is the character group of $T$, and $P$ is the weight lattice.
Then the algebraic group $G$ is simply connected if and only if $\RD(G,T)$ is simply connected;
see \cite[Section 2.15]{Springer}.
On the other hand, also the Lie group $G(\C)$ is simply connected if and only if
the root datum  $\RD(G,T)$ is simply connected; see \cite[Chapter 3, Section 2.4, Theorem 2.6]{GOV}
or Conrad \cite[Proposition D.4.1]{Conrad}.
The lemma follows.
\end{proof}

\begin{corollary}\label{c:Pic-G-Gss}
For a connected linear algebraic group $G$ over $\C$,
the following assertions are equivalent:
\begin{enumerate}[\upshape (i)]
\item $\Pic(G)=0$;
\item the algebraic group $G^\sss$ is simply connected;
\item $\pi_1^\top(G^\sss)=1$;
\item $\pi_1^\top(G^\ssu)=1$.
\end{enumerate}
\end{corollary}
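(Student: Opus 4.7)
The plan is to chain together the equivalences (i)\,$\Leftrightarrow$\,(ii)\,$\Leftrightarrow$\,(iii)\,$\Leftrightarrow$\,(iv) using results already established in this section together with one extra topological input.

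First, the equivalence (i)\,$\Leftrightarrow$\,(ii) is exactly Lemma \ref{l:Pic-sc}, and the equivalence (ii)\,$\Leftrightarrow$\,(iii) is exactly Lemma \ref{l:sc-Lie-alg} (noting that the algebraic group $G^\sss$ is connected and semisimple, so the lemma applies). So no additional work is needed for these two equivalences.

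It remains to establish (iii)\,$\Leftrightarrow$\,(iv), for which the natural tool is the short exact sequence \eqref{e:ssu},
\[
1\to G^\uu\to G^\ssu\to G^\sss\to 1,
\]
viewed on complex points. The induced map $G^\ssu(\C)\to G^\sss(\C)$ is a principal $G^\uu(\C)$-bundle in the analytic topology, hence a Serre fibration. In characteristic $0$, the exponential (or rather logarithm) map identifies the variety underlying $G^\uu$ with an affine space, so $G^\uu(\C)$ is homeomorphic to some $\C^n\cong\R^{2n}$ and in particular is contractible. The long exact sequence of homotopy groups of the fibration then yields
\[
\pi_1^\top(G^\uu)\to \pi_1^\top(G^\ssu)\to \pi_1^\top(G^\sss)\to \pi_0^\top(G^\uu),
\]
where both outer terms vanish. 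Thus $\pi_1^\top(G^\ssu)\cong \pi_1^\top(G^\sss)$, and in particular one is trivial if and only if the other is. This gives (iii)\,$\Leftrightarrow$\,(iv) and completes the proof.

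There is no real obstacle here: the whole statement is a matter of stringing together Lemmas \ref{l:Pic-sc} and \ref{l:sc-Lie-alg} with the elementary observation that the unipotent radical is topologically contractible over $\C$. The only point to be a bit careful about is to invoke the correct form of the fibration exact sequence and to note that the identification of $G^\uu$ with an affine space as a variety uses characteristic $0$, which is exactly our standing hypothesis.
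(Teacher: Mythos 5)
Your proposal is correct and follows the same route as the paper: (i)$\Leftrightarrow$(ii) by Lemma \ref{l:Pic-sc}, (ii)$\Leftrightarrow$(iii) by Lemma \ref{l:sc-Lie-alg}, and (iii)$\Leftrightarrow$(iv) from the short exact sequence \eqref{e:ssu}. The only difference is that you spell out the last step (contractibility of $G^\uu(\C)$ and the fibration exact sequence), which the paper leaves implicit.
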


\begin{proof}
By Lemma \ref{l:Pic-sc}, we have (i)$\eq$(ii).

By Lemma \ref{l:sc-Lie-alg}, the Lie group $G^\sss(\C)$ is simply connected
if and only if  the algebraic group $G^\sss$ is simply connected. Thus (ii)$\eq$(iii).

From the short exact sequence \eqref{e:ssu} we obtain an isomorphism
$$\pi_1^\top(G^\ssu)\isoto\pi_1^\top(G^\sss).$$
Thus (iii)$\eq$(iv), which completes the proof of the lemma.
\end{proof}

\section{Auxiliary pairs}\label{s:Aux}

In this section we recall the constructions of auxiliary groups and homogeneous spaces
that we shall need for our proof of Theorem  \ref{thm:main}.
For a homogeneous space  $X$ of an algebraic $k$-group  $G$ satisfying the hypotheses
of Theorem \ref{thm:main}, we construct homogeneous spaces $Y$, $Z$, and $W$ of certain $k$-groups
($G_Y$, $G_Z$, and  $G_W$, respectively), with morphisms of pairs
$$(G,X) \leftarrow (G_Y, Y) \rightarrow (G_Z, Z) \rightarrow (G_W, W) $$
that will permit us  to prove  Theorem  \ref{thm:main} successively
for $W$, $Z$, $Y$, and finally for $X$.
These constructions already appeared in the preprint \cite{BD} and were published in \cite{Demarche};
for the reader's convenience we repeat them here.

{\em Construction of the homogeneous space  $Y$}.\ \
Let $X$ be a homogeneous space of a connected linear algebraic  $k$-group
defined over an algebraically closed field $k$ of characteristic 0.
Assume that  $\Pic(G)=0$.

Choose a $ k$-point $x\in X(k)$. We denote by $H$ the stabilizer of $x$ in $G$.
We do not assume that  $H$ is connected.

Let $H^\mult$ denote the largest quotient group of $H$ that is a group of multiplicative type.
Set $H^\kerchar=\ker\big[H\to H^\mult\big]$.
We have a canonical  homomorphism $H^\mult\to G^\tor$, which in general is not injective.

We choose an embedding $j\colon H^\mult\into Q$
of $H^\mult$ into a $k$-torus $Q$.
Consider the embedding
$$
j_*\colon H \to G\times_k Q, \quad\ h \mapsto (h,j(m_H(h)))\ \,\text{for}\ \,h\in H
$$
where $m_H\colon H \to H^{\mult}$ is the canonical  epimorphism.
Set
$$
G_Y=G\times_k Q,\quad H_Y=j_*(H)\subset G_Y,\quad Y=G_Y/H_Y,\quad y=1\cdot H_Y\in Y(k).
$$

The projection $\pi\colon G_Y = G \times Q \to G$ satisfies $\pi(H_Y) = H$, and
it induces a $G_Y$-equivariant map $\pi_*\colon Y\to X$ such that $\pi_*(y)=x$.
One can easily see that  $Y$ is a torsor over  $X$ under the torus $Q$.
We obtain a morphism of pairs
$$
 (G_Y,Y)\to (G,X).
$$
Observe that  the homomorphism  $H_Y^{\mult}\to G_Y^{\tor}$ is injective, and therefore
$$
H_Y\cap G_Y^\ssu=(H_Y)^\kerchar\cong H^\kerchar.
$$

{\em Construction of the homogeneous space $Z$.}
Set
$$G_Z=G_Y^\tor=G_Y/G_Y^\ssu\quad \text{where}\quad  G_Y^\ssu:=\ker\big[G_Y\to G_Y^\tor\big].$$
We have a canonical homomorphism $\mu\colon G_Y\to G_Z$.
Then $G_Z$ is a  $k$-torus and we have $\widehat{G_Z}=\widehat{G_Y}$.

The inclusion $i\colon H\into G$ induces a homomorphism $i^\mult\colon H^\mult\to G^\mult = G^\tor$.
We obtain an embedding
$$
\iota\colon H^\mult\into G^\tor\times_k Q,\quad\ h\mapsto (i^\mult(h),j(h))\ \,\text{for}\ \, h\in H^\mult.
$$
Set
$$
Z=Y/G_Y^\ssu=(G^\tor\times_k Q)/\iota(H^\mult);
$$
then we have a $G_Y$-equivariant map $\mu_*\colon Y\to Z$ whose fiber over the  $k$-point
$z:=\mu_*(y)\in Z(k)$ is isomorphic to
$$
G_Y^\ssu/(H_Y\cap G_Y^\ssu)\cong G^\ssu / H^\kerchar.
$$
The variety $Z$ is a homogeneous space of $G_Z$
with stabilizer $H_Z=H_Y^\mult\subset G_Y^\tor=G_Z$.
Note that
$$
\widehat{H_Z}=\widehat{H_Y^\mult}=\widehat{H_Y}.
$$
We have a natural morphism of pairs
$$
(G_Y,Y)\to (G_Z,Z).
$$

{\em Construction of the homogeneous space  $W$.}
We set  $G_W=G_Z/H_Z$, $W=Z$, $w=z$;
then $W$ is a principal homogeneous space of the torus $G_W$.
There is a natural morphism of pairs
$$
(G_Z,Z)\to (G_W, W).
$$

\section{Proof of Theorem \ref{thm:main}}
\label{s:Proofs}

In this section we prove Theorem \ref{thm:main} and
Corollary \ref{cor:main}.
We use lemmas of Section \ref{s:Picard} and the constructions of Section \ref{s:Aux}.

{\em Step} 1.\ \
First we treat the case of a principal homogeneous space $W$ of a  $k$-torus $G_W$.
Let $w\in W(\C)$ be a $\C$-point.
The map $G_W\to W$ defined by $g\mapsto g\cdot w$ is an isomorphism of $\C$-varieties,
and we have an induced isomorphism of groups
\[ \pi_1^\top(G_W,1)\isoto\pi_1^\top(W,w).\]
Since $G_W$ is a torus, the group $\pit_1(G_W)$ is abelian,
and therefore the group $\pit_1(W,w)$ is abelian as well.
We have an induced isomorphism of abelian groups
$$
\pi_1^\top(G_W,1)(-1)\isoto\pi_1^\top(W,w)(-1).
$$
Since
$$
\pi_1^\top(G_W,1)(-1)=(G_{W})_*\cong\Hom\big(\widehat{G_W},\Z\big)=\Ext^0\big(\widehat{G_W},\Z\big),
$$
we obtain a canonical isomorphism
$$
\pi_1^\top(W,w)(-1)\isoto\Ext^0\big(\widehat{G_W},\Z\big).
$$
This proves Theorem \ref{thm:main} for $(G_W,W)$.

{\em Step} 2.\ \
Assume that we have a  homomorphism of $\C$-tori $\gamma_\alpha\colon G_{W'}\to G_W$
and a  $\gamma_\alpha$-equivariant map of principal homogeneous spaces
$\alpha\colon W'\to W$ sending a $\C$-point $w'\in W'(\C)$ to a $\C$-point $w\in W(\C)$.
Then the following  diagram clearly  commutes:
\begin{equation}\label{e:diag-tori}
\begin{aligned}
\xymatrix@C=10mm{
\pi_1^\top(W',w')(-1)\ar[d]_-\cong \ar[r]^-{\alpha_*} &\pi_1^\top(W,w)(-1)\ar[d]^-\cong \\
\Ext^0\big(\widehat{G_{W'}},\Z\big)  \ar[r]^-{\ \,\gamma_{\alpha *}}   &\Ext^0\big(\widehat{G_{W}},\Z\big)
}
\end{aligned}
\end{equation}
where the vertical arrows are canonical isomorphisms of Step 1.

We have $Z=W$, whence $\pit_1(Z)$ is an abelian group.
Moreover, $G_Z/H_Z=G_W$, and the evident morphism of complexes
$$\widehat{G_W}\to \big[\widehat{G_Z}\to\widehat{H_Z}\big\rangle$$
is a quasi-isomorphism, whence
$$
\pi_1^\top(Z,z)(-1)=\pi_1^\top(W,w)(-1)\cong\Ext^0\big(\widehat{G_W},\Z\big)
\cong\Ext^0\big(\big[\widehat{G_Z}\to\widehat{H_Z}\big\rangle,\Z\big),
$$
and we obtain a canonical isomorphism
$$\pi_1^\top(Z,z)(-1)\,\longisoto\,\Ext^0\big(\big[\widehat{G_Z}\to\widehat{H_Z}\big\rangle,\Z\big).$$
This proves \ref{thm:main} for $(G_Z,Z)$.

{\em Step} 3.\ \
There is a  fibration $G^\ssu(\C)\to G^\ssu(\C)/H^\kerchar(\C)$ {\em with connected fiber}  $H^\kerchar(\C)$,
and hence a fibration exact sequence
$$
1=\pi_1^\top(G^\ssu)\lra \pi_1^\top(G^\ssu/H^\kerchar)\lra\pi_0(H^\kerchar)=1.
$$
In this sequence, we have $\pi_1^\top(G^\ssu)=1$ because $\Pic(G)=0$;
see Corollary  \ref{c:Pic-G-Gss}.
We see that $\pi_1^\top(G^\ssu/H^\kerchar)=1$.
But we have a fibration $\mu_*\colon Y(\C) \to Z(\C)$ with fiber
$G^\ssu(\C)/H^\kerchar(\C)$, and hence a fibration exact sequence
$$
1=\pi_1^\top(G^\ssu/H^\kerchar)\lra\pi_1^\top(Y,y)\labelto{\;\mu_*}\pi_1^\top(Z,z)
    \lra\pi_0(G^\ssu/H^\kerchar)=1.
$$
It follows that the homomorphism $\pi_1^\top(Y,y)\labelto{\;\mu_*}\pi_1^\top(Z,z)$ is an isomorphism,
whence the group $\pi_1^\top(Y,y)$ is abelian and we have an isomorphism
of abelian groups
\[\pi_1^\top(Y,y)(-1)\labelto{\mu_*}\pi_1^\top(Z,z)(-1).\]
Since $\widehat{G_Y}=\widehat{G_Z}$ and $\widehat{H_Y}=\widehat{H_Z}$\hs,
we deduce  Theorem \ref{thm:main} for $(G_Y,Y)$
from Theorem \ref{thm:main} for $(G_Z,Z)$.

\def\fone{{ \framebox{\makebox[\totalheight]{1}} }}
\def\ftwo{{ \framebox{\makebox[\totalheight]{2}} }}
\def\fthree{{ \framebox{\makebox[\totalheight]{3}} }}
\def\ffour{{ \framebox{\makebox[\totalheight]{4}} }}

{\em Step} 4.\ \
We have a torsor $\pi_*\colon Y\to X$  under the torus $Q$,
whence we obtain an exact sequence of groups
\[ \pi_1^\top(Q,1)\labelto{\lambda_*}\pi_1^\top(Y,y)\labelto{\pi_*}\pi_1^\top(X,x)\to 1\]
where the arrow $\lambda_*$ is induced by the map
\[\lambda\colon Q\to Y,\quad q\mapsto q\cdot y\ \,\text{for}\ \,q\in Q.\]
Since the group $\pit_1(Y,y)$ is abelian, so is the group $\pit_1(X,x)$,
and we obtain an exact sequence of abelian groups
\[\pi_1^\top(Q,1)(-1)\labelto{\lambda_*}\pi_1^\top(Y,y)(-1)\labelto{\pi_*}\pi_1^\top(X,x)(-1)\to 0.\]
We have a short exact sequence of complexes
$$
0\to \big(\Ghat\to\Hhat\big)\,\lra\, \big(\widehat{G_Y}\to\Hhat\big)\,\lra\,\big(\widehat{Q}\to 0\big)\to 0,
$$
whence we obtain an exact sequence
\begin{equation*}
\Ext^0\big(\widehat{Q},\Z\big)\,\lra\, \Ext^0\big(\widehat{G_Y}\to\Hhat,\Z\big)\,\lra\,
    \Ext^0\big(\Ghat\to\Hhat,\Z\big)\,\lra\, \Ext^1\big(\widehat{Q},\Z\big)= 0
\end{equation*}
(because by Lemma \ref{lem:Ext1} we have $\Ext^1\big(\widehat{Q},\Z\big)=\Hom\big(\widehat{Q}_\tors,\Q/\Z\big)=0$).
We obtain a diagram with exact rows
\begin{equation}\label{eq:Q}
\begin{aligned}
\xymatrix{
{\pi}_1^\top(Q,1)(-1) \ar[r]^-{\lambda_*} \ar[d]^-\cong\ar@{}[dr]|{\fone}
     &\pi_1^\top(Y,y)(-1)\ar[r]^-{\pi_*}\ar[d]^-\cong\ar@{}[dr]|{\ftwo}  &\pi_1^\top(X,x)(-1)\ar[r]\ar@{.>}[d]      &0 \\
\Ext^0\big(\widehat{Q},\Z\big)\ar[r]          &\Ext^0\big(\widehat{G_Y}\to\widehat{H_Y},\Z\big)\ar[r]
     &\Ext^0\big(\widehat{G}\to\widehat{H},\Z\big)\ar[r]  &0\, .
}
\end{aligned}
\end{equation}

We show that  rectangle $\fone$ commutes.
Consider the diagram
$$
\xymatrix@C=6mm{
{\pi}_1^\top(Q,1)(-1) \ar[r]^-{\lambda_*} \ar[d]^-\cong\ar@{}[dr]|{\fone}   &\pi_1^\top(Y,y)(-1)\ar[r]^-\cong\ar[d]^-\cong\ar@{}[dr]|{\fthree}
                                    &\pi_1^\top(Z,z)(-1)\ar[r]^-\cong\ar[d]^-\cong \ar@{}[dr]|{\ffour}      &\pi_1^\top(W,w)(-1)\ar[d]^-\cong   \\
\Ext^0\big(\widehat{Q},\Z\big)\ar[r]          &\Ext^0\big(\widehat{G_Y}\to\widehat{H_Y},\Z\big)\ar[r]^-\cong
&\Ext^0\big(\widehat{G_Z}\to\widehat{H_Z},\Z\big)\ar[r]^-\cong  &\Ext^0\big(\widehat{G_W},\Z\big) .
}
$$
By construction,  rectangles $\fthree$ and  $\ffour$ commute.
The commutative  diagram \eqref{e:diag-tori} shows
that the big rectangle $\fone\cup\fthree\cup\ffour$ commutes.
It follows that  rectangle $\fone$ commutes.

In the diagram with exact rows \eqref{eq:Q}, rectangle $\fone$ commutes,
which permits one to define the dotted arrow so that rectangle $\ftwo$ commutes.
Thus we obtain an isomorphism
\begin{equation}\label{eq:iso}
 \pi_1^\top(X,x)(-1)\,\longisoto\, \Ext^0\big(\widehat{G}\to\widehat{H},\Z\big),
\end{equation}
which a priori might depend on the embedding  $j\colon H^\mult\into Q$.

When constructing the torsor $Y\to X$ in Section \ref{s:Aux},
we constructed it from an embedding $j\colon H^\mult\into Q$.
If we choose another embedding $j'\colon H^\mult\into Q'$,
we obtain another torsor  $Y'\to X$ under $Q'$.
Set $Q''=Q\times_k Q'$ and denote by $j''\colon H^\mult\into Q''$ the diagonal embedding.
We obtain a torsor $Y''\to X$ under $Q''$ dominating  both $Y$ and $Y'$,
and from this fact we can easily see that the isomorphism \eqref{eq:iso}
does not depend on the choice  of the embedding $j\colon H^\mult\into Q$.
This completes the proof of Theorem \ref{thm:main}.
\qed

{\em Proof of Corollary \ref{cor:main}.}
The short exact sequence of complexes
$$
0\to \big[0\to \Hhat\big\rangle \,\lra\,\big[\Ghat\to\Hhat\big\rangle\,\lra\, \big[\Ghat\to 0\big\rangle\to 0
$$
induces a long exact sequence
\begin{equation}\label{eq:exacte-preuve}
\Ext^0\big(\Hhat,\Z\big)\,\to\, \Ext^0\big(\Ghat,\Z\big)\,\to\, \Ext^0\big(\Ghat\!\to\!\Hhat, \Z\big)\,
     \to\,\Ext^1\big(\Hhat,\Z\big)\,\to\,\Ext^1\big(\Ghat,\Z\big).
\end{equation}
We have  $\Ext^0\big(\Hhat,\Z\big)=\Hom\big(\Hhat, \Z\big)$ and  $\Ext^0\big(\Ghat,\Z\big)=\Hom\big(\Ghat,\Z\big)$.
By Lemma \ref{lem:Ext1}, we have $\Ext^1\big(\Ghat,\Z\big)\cong\Hom\big(\Ghat_\tors,\Q/\Z\big)=0$ and
$$
\Ext^1\big(\Hhat,\Z\big)\,\cong\,\Hom\big(\Hhat_\tors,\Q/\Z\big).
$$
By Theorem \ref{thm:main} we may write $\pi_1^\top(X,x)(-1)$
instead of  $\Ext^0\big(\Ghat\to\Hhat, \Z\big)$ in \eqref{eq:exacte-preuve}.
Thus we obtain from \eqref{eq:exacte-preuve} an exact sequence
\[\Hom\big(\Hhat,\Z\big)\labelt{i_*}\Hom\big(\Ghat,\Z\big)\,\to\,\pi_1^\top(X,x)(-1)\,
   \to\, \Hom\big(\Hhat_\tors,\Q/\Z\big)\,\to 0,\]
which proves assertion (i) of Corollary \ref{cor:main}.
If $H$ is connected, then $\Hhat=\widehat{H^\tor}$ is torsion-free,
and assertion (ii) follows from assertion (i).
\qed

\section{Proof of  Theorem \ref{thm:pi2}.}
\label{s:pi2}

Let $G$ and $H$ be as in Theorem \ref{thm:pi2}.
Let $T^\red_H\subseteq H^\red$ and $T^\ssc_H\subseteq H^\ssc$ be compatible maximal tori.
We recall the construction of an isomorphism
\begin{equation}\label{e:B98-H}
\ev_H\colon\, \pia_1(H)\coloneqq\pia_1(H^\red)\coloneqq\coker\big[(T_H^\ssc)_*\hs
    \to\hs (T_H^\red)_*\big]\,\longisoto\,\pi_1^\top(H)(-1)
\end{equation}
from \cite[Prop.~1.11]{Bor-Memoir}. Here we write $\pit_1(H)(-1)$ for $\pit_1(H^0)(-1)$.
A cocharacter $\nu\in(T_H^\red)_*$ gives a homomorphism
\[\GmC\to T_{H^\red}\to H^\red\]
and induces a homomorphism
\[\nu_*\colon\pit_1(\GmC)\to\pit_1(H^\red)\cong\pit_1(H).\]
If $\nu$ comes from $(T_H^\ssc)_*$\hs, then $\nu_*$ factors
via $\pit_1(H^\ssc)=1$ and hence is trivial.
Thus we obtain the desired homomorphism \eqref{e:B98-H}
\[
\pia_1(H^\red)
   \coloneqq\coker\big[(T_H^\ssc)_*\!\to\! (T_H^\red)_*\big]\hs\lra\hs
\Hom\big(\pit_1(\GmC),\pit_1(H)\big)\eqqcolon \pi_1^\top(H)(-1).
\]
By \cite[Prop.~1.11]{Bor-Memoir} this homomorphism is an isomorphism.
Similarly we construct an isomorphism
\begin{equation}\label{e:B98-G}
\ev_G\colon \pia_1(G^\red)\longisoto\pi_1^\top(G)(-1).
\end{equation}
Since the group $\pit_1(G)$ is isomorphic (non-canonically)
to the abelian group $\pia_1(G)$, it is abelian.
Note that the following diagram commutes:
\[
\xymatrix@C=10mm{
\pia_1(H) \ar[r]^-{i_*}\ar[d]_-{\ev_H}^-\cong  &\pia_1(G)\ar[d]^-{\ev_G}_-\cong \\
\pit_1(H)(-1) \ar[r]^-{\,i_*}             &\pit_1(G)(-1) .
}
\]

It is well known that for a connected linear algebraic group $G$ over $\C$, we have $\pit_2(G)=0$.
Indeed, we may assume that $G$ is simple and simply connected.
Let $K$ be a maximal compact subgroup of $G(\C)$.
We have $\pi_2^\top(K)=0$ by Cartan's theorem; see Borel \cite[Theorem 1(2)]{Borel}.
The Cartan decomposition for $G(\C)$ shows that $G(\C)$ is homotopically equivalent to $K$;
see \cite[Chapter 7, Section 3.2, Corollaries 1 and 5 of Theorem 3.2]{GOV}
or Knapp \cite[Theorem 6.31(c),(g)]{Knapp}.
It follows that  $\pi_2^\top(G)\cong\pi_2^\top(K)=0$.

We have a fibration
\[G\to X,\quad\ g\mapsto g\cdot x\ \ \text{for}\ \, g\in G\]
with fiber $H$, where we write $G$ for $G(\C)$, $H$ for $H(\C)$, and so on.
This fibration gives rise to a homotopy exact sequence
\begin{equation}\label{e:f-exact}
0=\pit_2(G)\to\pit_2(X,x)\to\pit_1(H)\to\pit_1(G)\to\pit_1(X,x)\to\pit_0(H);
\end{equation}
see, for instance, Hatcher \cite[Theorem 4.41]{Hatcher}.

First assume that $H$ is connected, that is, $\pit_0(H)=1$.
Since in the sequence \eqref{e:f-exact}
the group $\pit_1(G)$ is abelian, so is $\pit_1(X,x)$.
Applying to a part of sequence \eqref{e:f-exact}
the twisting functor $\Hom(\pit_1(\GmC),\,\cdot\,)$,
we obtain an exact sequence
\[\pit_1(H)(-1)\labelt{i_*}\pit_1(G)(-1)  \to\pit_1(X,x)(-1)\to 0,\]
which by \eqref{e:B98-H} and \eqref{e:B98-G} gives Theorem \ref{thm:pi2}(i).

Now we do not assume that $H$ is connected.
Applying to a part of sequence \eqref{e:f-exact}
the twisting functor $\Hom(\pit_1(\GmC),\,\cdot\,)$,
we obtain an exact sequence
\[0\to\pit_2(X,x)(-1)\to\pit_1(H)(-1)\labelt{i_*}\pit_1(G)(-1),\]
which by \eqref{e:B98-H} and \eqref{e:B98-G} gives Theorem \ref{thm:pi2}(ii).
\qed

\noindent
{\sc Acknowledgements:}\ \ The author thanks Cyril Demarche, Vladimir Hinich, and Tam\'as Szamuely
for very helpful discussions and  email correspondence.
We thank the anonymous referee for thorough reading the paper
and for his/her comments, which helped us to improve the exposition.

\end{document}